\newtheorem{theorem}{Theorem}[section]
\newtheorem{lemma}[theorem]{Lemma}
\theoremstyle{definition}
\newtheorem{example}[theorem]{Example}
\theoremstyle{remark}
\newtheorem{remark}[theorem]{Remark}
\title{Symmetric Bush-type generalized Hadamard matrices and association schemes}
\date{\today}
\author{
 Hadi Kharaghani\thanks{Department of Mathematics and Computer Science, University of Lethbridge,
Lethbridge, Alberta, T1K 3M4, Canada.  \texttt{kharaghani@uleth.ca}} 
\and  
 Sho Suda\thanks{Department of Mathematics Education,  Aichi University of Education, 1 Hirosawa, Igaya-cho, Kariya, Aichi 448-8542, Japan. \texttt{suda@auecc.aichi-edu.ac.jp}}}
\begin{document}
\maketitle
\abstract{We define Bush-type generalized Hadamard matrices over abelian groups and construct symmetric Bush-type generalized Hadamard matrices over the additive group of finite field $\mathbb{F}_q$, $q$ a prime power. We then show and study an association scheme obtained from such generalized Hadamard matrices.


}

\section{Introduction}
A {\it Hadamard matrix} $H$ of order $n$ is a square matrix of order $n$ with entries from $\{1,-1\}$ such that $H H^T=nI_n$, where $H^T$ is the transpose of $H$ and $I_n$ is the identity matrix of order $n$. 
A Hadamard matrix $H=(H_{ij})_{i,j=1}^{2n}$ of order $4n^2$, where each $H_{ij}$ is a square matrix of order $2n$, is said to be of {\it Bush-type} if  $H_{ii}=J_{2n}$ for any $i\in\{1,\ldots,2n\}$, and $H_{ij}J_{2n}=J_{2n}H_{ij}=0$ for any distinct $i,j\in\{1,\ldots,2n\}$, where $J_{2n}$ is the square matrix of order $2n$ with all one's entries. 
Bush-type Hadamard matrices have been studied in \cite{B,GC,K,MX,W}. 
In particular, it was shown in \cite{GC} that the existence of symmetric Bush-type Hadamard matrices is equivalent to that of certain symmetric association schemes of class $3$.  
Furthermore, the association scheme of class $3$ is related to strongly regular graphs with certain parameters with the property that the vertex set is decomposed into maximal cliques attaining Delsarete-Hoffmann's bound \cite{HT} (see also \cite[Lemma~1.1]{MX}).



If $\{1,-1\}$ is regarded as a multiplicative group, then one may consider a {\it generalized Hadamard matrix} as a  matrix with entries in a finite abelian group with a multiplicative property, see Section~\ref{sec:gh}. 


In this paper, we define Bush-type generalized Hadamard matrices over an abelian group and demonstrate 
 a construction method by using some special generalized Hadamard matrices over the additive group of finite field $\mathbb{F}_q$, $q$ a prime power and Latin squares obtained from the same field. 
In particular, we focus on the symmetric Bush-type generalized Hadamard matrices of order $q^2$ over the additive group of $\mathbb{F}_q$,  $q$ is a prime power.



We will see that some symmetric association schemes having  interesting properties are obtained from these matrices with a linear map from $\mathbb{F}_q$ into a subfield preserving addition and describe the eigenmatrices by the Kloosterman sums. 
The association scheme can be regarded as an extension of the association schemes of class $3$ obtained from symmetric Bush-type Hadamard matrices, and it is shown to be a fission scheme of the scheme for the case $n=2$ in \cite[Theorem~1]{CD}. 
In particular, our scheme is a fission scheme of the Hamming association scheme $H(2,q)$.


For the case where $q=2^m$ or $q=3^m$ and a linear map being the absolute trace, the eigenmatrices of the association schemes are given explicitly by calculating the Kloosterman sums.
Furthermore,  the association scheme for  the case $q=3^m$ is used to provide an affirmative answer to a recent question raised by Leopardi.


\section{Preliminaries}
\subsection{Generalized Hadamard matrices}\label{sec:gh}
Let $G$ be an additively written finite abelian  group of order $g$.  
A square matrix $H=(h_{ij})_{i,j=1}^{g\lambda}$ of order $g\lambda$ with entries from $G$ is called a {\it generalized Hadamard matrix} with the parameters $(g,\lambda)$ (or $GH(g,\lambda)$) over $G$ 
if for all distinct $i,k\in\{1,2,\ldots,g\lambda\}$, the multiset $\{h_{ij}-h_{kj}: 1\leq j\leq g\lambda\}$ contains exactly $\lambda$ times of each element of $G$. 
The matrix $H$ is {\it normalized} if  $H$ has the first row consisting of $0$, where $0$ denotes the identity element of $G$. 
Any generalized Hadamard matrix is transformed to be normalized by adding suitable elements to the columns. 

Let $H$ be a generalized Hadamard matrix over $G$ with the parameters $(g,\lambda)$. 
Assume that $g\lambda$ is a square, say $k^2$ with $k$ a positive integer.
The matrix $H$ is of {\it Bush-type} if 
$H$ is represented as a block matrix $H=(H_{ij})_{i,j=1}^k$, where each $H_{ij}$ is a square matrix of order $k$, such that $H_{ii}$ is the zero matrix for any $i\in\{1,\ldots,k\}$, and each row or column of  $H_{ij}$  has all entries of $G$ appearing the same number of times for any distinct $i,j\in\{1,\ldots,k\}$. 
Let $G'$ be a finite abelian group of order $g'$, and $\varphi$  a surjective homomorphism from $G$ to $G'$. 
It is easy to see that the matrix $\varphi(H):=(\varphi(h_{ij}))_{i,j=1}^{g\lambda}$ is a generalized Hadamard matrix $GH(g',g\lambda^2/{g'})$.

\subsection{Association schemes}
Let $n$ be a positive integer. 
Let $X$ be a finite set and $R_i$ ($i\in\{0,1,\ldots,n\}$) be a nonempty subset of $X\times X$. 
The \emph{adjacency matrix} $A_i$ of the graph with vertex set $X$ and edge set $R_i$ is a $(0,1)$-matrix indexed by $X$ such that $(A_i)_{xy}=1$ if $(x,y)\in R_i$ and $(A_i)_{xy}=0$ otherwise. 
A \emph{symmetric association scheme} of $n$-class is a pair $(X,\{R_i\}_{i=0}^n)$ satisfying the following:
\begin{enumerate}
\item $A_0=I_{|X|}$.
\item $\sum_{i=0}^n A_i = J_{|X|}$.
\item $A_i$ is symmetric for $i\in\{1,\ldots,n\}$.
\item For all $i$, $j$, $A_i A_j$ is a linear combination of $A_0,A_1,\ldots,A_n$.
\end{enumerate}
The vector space spanned by $A_i$'s forms a commutative algebra, denoted by $\mathcal{A}$ and called the \emph{Bose-Mesner algebra} or \emph{adjacency algebra}.
There exists a basis of $\mathcal{A}$ consisting of primitive idempotents, say $E_0=(1/|X|)J_{|X|},E_1,\ldots,E_n$. 
Since  $\{A_0,A_1,\ldots,A_n\}$ and $\{E_0,E_1,\ldots,E_n\}$ are two bases in $\mathcal{A}$, there exist the change-of-bases matrices $P=(p_{ij})_{i,j=0}^n$, $Q=(q_{ij})_{i,j=0}^n$ so that
\begin{align*}
A_j=\sum_{i=0}^n p_{ij}E_j,\quad E_j=\frac{1}{|X|}\sum_{i=0}^n q_{ij}A_j.
\end{align*}
The matrices $P$ or $Q$ are said to be {\it first or second eigenmatrices}. 
An association scheme is said to be {\it self-dual} if $P=Q$ for suitable rearranging the indices of the adjacency matrices and the primitive idempotents.   

The association scheme is a {\it translation association scheme} if the vertex set $X$ has the structure of an additively written abelian group, and
for all $i\in\{0,1,\ldots,n\}$, 
\begin{align*}
(x,y)\in R_i\Rightarrow (x+z,y+z)\in R_i.
\end{align*}
For translation association schemes, the first eigenmatrix is calculated by the characters as follows. 
For $i\in \{0,1,\ldots,n\}$ set $N_i=\{x\in X:(0,x)\in R_i\}$. 
For each character $\chi$ of $X$ we have 
\begin{align*}
A_i\chi=(\sum_{x\in N_i}\chi(x))\chi.
\end{align*}
Letting $X^*$ be the dual group of $X$, set $N_j^*=\{\eta\in X^*:E_j\eta=\eta\}$. 
Then the first eigenmatrix of the translation association scheme is expressed as 
\begin{align*}
P_{ij}=\sum_{x\in N_i}\chi(x) \text{ for } \chi\in N_j^*.
\end{align*}

Let $(X,\{R_i\}_{i=0}^n)$, $(X,\{R'_i\}_{i=0}^{n'})$ be symmetric association schemes. 
If there exists a partition $\Lambda_0:=\{0\},\Lambda_1,\ldots,\Lambda_{n'}$ of $\{0,1,\ldots,n\}$ such that $R'_i=\cup_{j\in\Lambda_i}R_j$ for any $i\in\{0,1,\ldots,n'\}$, then $(X,\{R_i\}_{i=0}^n)$ is said to be {\it fission} of $(X,\{R'_i\}_{i=0}^{n'})$ and $(X,\{R_i'\}_{i=0}^{n'})$ is said to be {\it fusion} of $(X,\{R_i\}_{i=0}^{n})$. 

\section{Construction of Bush-type generalized Hadamard matrices}
In this section, we show a method of construction for the Bush-type generalized Hadamard matrices. The method can be viewed as a generalization of the method used in \cite{K} in which a Hadamard matrix of order $2n$ and a Latin square of order $2n$ were used to construct a Bush-type Hadamard matrix of order $4n^2$.  

Let $H=(h_{ij})_{i,j=1}^{g\lambda}$ be a normalized generalized Hadamard matrix $GH(g,\lambda)$ over a finite abelian group $G$ of order $g$ and $L=(l(i,j))_{i,j=1}^{g\lambda}$ be a Latin square of order $g\lambda$ with entries from $\{1,\ldots,g\lambda\}$. 
We may assume that $L$ has the diagonal entries $1$ after permuting rows and columns appropriately. 
Define a matrix $C_k$ ($k=1,\ldots,g\lambda$) as
\begin{align*}
C_k=(-h_{ki}+h_{kj})_{i,j=1}^{g\lambda}.
\end{align*}
Define $M(H,L)$ as a square block matrix of order $g^2\lambda^2$ with $(i,j)$-block equal to $C_{l(i,j)}$. 
The $((i,i'),(j,j'))$-entry of $M(H,L)$ means the $(i',j')$-entry of $(i,j)$-block of $M$. 
The $(i,i')$-th row of $M$ means the $i'$-th row in the $i$-th block of $M$.  

\begin{theorem}\label{thm:GH}
Let $G$ be an additively written finite abelian group of order $g$. 
Let $H$ be a normalized generalized Hadamard matrix $GH(g,\lambda)$ over $G$, and $L$ be a Latin square of order $g\lambda$ with entries from $\{1,\ldots,g\lambda\}$ and with diagonal entries equal to $1$. 
Let  $M=M(H,L)$ defined above.
Then the following hold.
\begin{enumerate}
\item The matrix $M$ is a Bush-type generalized Hadamard matrix $GH(g,g\lambda^2)$. 
\item The matrix $M$ is symmetric if and only if $-h_{l(i,j)k}+h_{l(i,j)l}=-h_{l(j,i)l}+h_{l(j,i)k}$ for $i,j,k,l\in\{1,\ldots,g\lambda\}$. 
\item Let $G'$ be a finite abelian group of order $g'$, and $\varphi$ be a surjective homomorphism from $G$ to $G'$. Then the generalized Hadamard matrix $\varphi(M)$ is  of Bush-type.
\end{enumerate}
\end{theorem}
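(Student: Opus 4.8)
The plan is to prove all three parts by unwinding the definitions directly; the only ingredient beyond bookkeeping is the ``transpose'' version of the generalized Hadamard property, which enters once in part~(i).

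For (i), I would first note that $M$ has order $(g\lambda)^2 = g\cdot(g\lambda^2)$, so the parameters $(g,g\lambda^2)$ are consistent, and then verify the defining difference condition by comparing two distinct rows of $M$, indexed by pairs $(i_1,i_1')$ and $(i_2,i_2')$, in two cases. If $i_1 = i_2 =: i$ and $i_1' \ne i_2'$, then inside block-column $j$ the two rows differ by the constant vector all of whose entries equal $-h_{l(i,j)i_1'}+h_{l(i,j)i_2'}$; since $L$ is a Latin square, $l(i,j)$ runs over $\{1,\dots,g\lambda\}$ as $j$ does, so the full multiset of differences consists of $g\lambda$ copies of $\{h_{ki_2'}-h_{ki_1'} : 1\le k\le g\lambda\}$, the termwise difference of columns $i_1'$ and $i_2'$ of $H$. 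If $i_1 \ne i_2$, then $a:=l(i_1,j)$ and $b:=l(i_2,j)$ are distinct for every $j$ (no repeated entry in a column of a Latin square), and inside block-column $j$ the two rows differ by $\big((h_{bi_2'}-h_{ai_1'})+(h_{aj'}-h_{bj'})\big)_{j'=1}^{g\lambda}$, a constant translate of the termwise difference of rows $a$ and $b$ of $H$. In the second case the $GH(g,\lambda)$ property of $H$ gives each element of $G$ exactly $\lambda$ times in each of the $g\lambda$ block-columns, hence $g\lambda^2$ times overall; in the first case I need that distinct columns of a $GH(g,\lambda)$ also have each group element occurring $\lambda$ times among their termwise differences, i.e. that $H^T$ is again a $GH(g,\lambda)$ — this follows from a short character argument (for a nontrivial character $\chi$ of $G$ the complex matrix $(\chi(h_{ij}))$ satisfies $\chi(H)\chi(H)^*=g\lambda I$, hence $\chi(H)^*\chi(H)=g\lambda I$, and Fourier inversion over $G$ yields the count), and again gives $\lambda\cdot g\lambda=g\lambda^2$ per group element. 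For the Bush-type block structure, $M_{ii}=C_{l(i,i)}=C_1$ is the zero matrix because $H$ is normalized, while for $i\ne j$ we have $l(i,j)\ne1$ (the symbol $1$ appears only on the diagonal of $L$), so each row of $C_{l(i,j)}$ is a translate of the $l(i,j)$-th row of $H$ and each column is a translate of its negation; as that row of $H$ contains every element of $G$ exactly $\lambda$ times ($H$ being normalized), so does every row and column of $C_{l(i,j)}$.

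Part~(ii) is a block-transpose computation: $M=M^T$ if and only if $M_{ij}=(M_{ji})^T=(C_{l(j,i)})^T$ for all $i,j$; the $(k,l)$-entry of $M_{ij}=C_{l(i,j)}$ equals $-h_{l(i,j)k}+h_{l(i,j)l}$ while that of $(C_{l(j,i)})^T$ equals $-h_{l(j,i)l}+h_{l(j,i)k}$, and equating these for all $i,j,k,l$ is exactly the stated condition. For part~(iii), the observation is that $\varphi$ commutes with the construction: applying $\varphi$ entrywise to $C_k$ gives the matrix $C_k$ built from $\varphi(H)$, so $\varphi(M)=M(\varphi(H),L)$; since $\varphi(H)$ is a normalized generalized Hadamard matrix over $G'$ of the same order $g\lambda$ (normalized because $\varphi(0)$ is the identity of $G'$; cf.\ Section~\ref{sec:gh}) and $L$ is a Latin square of that order with diagonal entries $1$, part~(i) applies verbatim to $(\varphi(H),L)$ and shows $\varphi(M)$ is of Bush-type. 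The only step that uses more than the Latin-square axioms, the normalization of $H$, and the block-matrix definitions is the transpose version of the generalized Hadamard property in the first case of~(i) — which I expect to be the main (if minor) obstacle, being the one fact that must be imported rather than computed.
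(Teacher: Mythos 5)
Your proposal is correct and follows essentially the same route as the paper: the same two-case analysis of row differences of $M$ (block-row indices equal or not), the same identification of the blocks $C_{l(i,j)}$ as translates of rows of $H$ for the Bush-type structure, the same entrywise transpose computation for (ii), and the same reduction of (iii) to (i). The only addition is that you explicitly justify, via the character argument, that column differences of a $GH(g,\lambda)$ over an abelian group also hit each element $\lambda$ times, a fact the paper uses silently in the case of equal block-row indices; this is a welcome (minor) completion rather than a different method.
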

\begin{proof}
(i): 
Let $i,i',j,j',k,k'$ be elements of $\{1,\ldots,g\lambda\}$. 
We calculate the difference of the $(i,i')$-row and the $(j,j')$-row of $M$ for $(i,i')\neq (j,j')$. 

In the case where $i=j$, we put $l=l(i,k)$. 
The difference of the $(i',k')$-entry and the $(j',k')$-entry of $C_{l}$ is $-h_{l,j}+h_{l,j'}$. 
This value does not depend on the choice of $k'$, and runs over $G$ when $k$ runs over $\{1,\ldots,g\lambda\}$ since  $H$ is a $GH(g,\lambda)$ and $L$ is a Latin square. 

In the case where $i\neq j$, we put $l_1=l(i,k),l_2=l(j,k)$. 
The difference of the $(i',k')$-entry of $C_{l_1}$ and the $(j',k')$-entry of $C_{l_2}$ is $\gamma+h_{l_1,k'}-h_{l_2,k'}$, where $\gamma=-h_{l_1,i'}+h_{l_2,j'}$.  
This value runs over $G$ when $k'$ runs over $\{1,\ldots,g\lambda\}$ since $H$ is a $GH(g,\lambda)$ and $L$ is a Latin square. 
Thus, $M$ is a generalized Hadamard matrix over $G$. 

All diagonal blocks of $M$ are $C_1$ whose entries are all the identity element in $G$. 
Each off-diagonal block is $C_i$ for some $i\in\{2,\ldots,g\lambda\}$. 
Each row and column of the matrix $C_i$ has entries of $G$, each appearing exactly one time. 
Thus, a generalized Hadamard matrix $M$ is of Bush-type.

(ii): The $(i,j)$-block of $M$ is $C_{l(i,j)}$. Thus, $M$ is symmetric if and only if $C_{l(i,j)}=C_{l(j,i)}^T$ for $i,j\in \{1,\ldots,g\lambda\}$. 
This is equivalent to the condition that $-h_{l(i,j)k}+h_{l(i,j)l}=-h_{l(j,i)l}+h_{l(j,i)k}$ for $i,j,k,l\in\{1,\ldots,g\lambda\}$. 

(iii): The property of Bush-type for $\varphi(M)$ follows from (i). 
\end{proof}

\begin{example}\label{ex:gh}
Let $q$ be a prime power. 
Let $\mathbb{F}_q$ be a finite field with $q$ elements $\alpha_1=0,\alpha_2,\ldots,\alpha_q$.

Let $H$ be the multiplication table of $\mathbb{F}_q$, i.e., the $(i,j)$-entry of $H$ is $\alpha_i\cdot \alpha_j$. 
Then $H$ is a generalized Hadamard matrix $GH(q,1)$ over the additive group of $\mathbb{F}_q$. 

Let $L$ be the subtraction table of $\mathbb{F}_q$, i.e., the $(i,j)$-entry of $L$ is $-\alpha_i+\alpha_j$. 
Then $L$ is a Latin square over $\mathbb{F}_q$ with diagonal entries equal to $0$.

The matrix $M=M(H,L)$ obtained from $H$ and $L$ is a Bush-type generalized Hadamard matrix $GH(q,q)$ over the additive group of $\mathbb{F}_q$ by Theorem~\ref{thm:GH}(i). 
Furthermore, $M$ is symmetric by Theorem~\ref{thm:GH}(ii).

Let $\mathbb{F}_p$ be a subfield of $\mathbb{F}_q$, and let $\varphi$ be a linear map from $\mathbb{F}_q$ to $\mathbb{F}_p$. 
By \cite[Theorem~2.2.4]{LN}, there exists an element $\beta\in\mathbb{F}_q$ such that $\varphi(\alpha)=\text{tr}(\beta\alpha)$, where $\text{tr}$ is  the trace function from $\mathbb{F}_q$ to $\mathbb{F}_p$.
By \cite[Theorems~2.2.3(iii), 2.2.4]{LN} $\varphi$ is surjective. 
Then $\varphi(M)$ is a Bush-type generalized Hadamard matrix $GH(p,q^2/p)$ by Theorem~\ref{thm:GH}(iii).
\end{example}
\section{Association schemes related to symmetric Bush-type Generalized Hadamard matrices over $\mathbb{F}_q$}
We will show that an association scheme is obtained from the Generalized Hadamard matrix $GH(p,q^2/p)$ in Example~\ref{ex:gh}
and continue to use all the notations  in Example~\ref{ex:gh}.

We define permutation matrices $P_{\alpha_i}$ ($i\in \{1,\ldots,q\}$) by splitting 
\begin{align*}
L=\sum_{i=1}^{q} \alpha_i P_{\alpha_i}.
\end{align*}
Note that $P_0=I_q$. 
Since $C_1=0$ holds, we observe that 
\begin{align*}
M&=\sum_{i=1}^q P_{\alpha_i}\otimes C_i=\sum_{i=2}^q P_{\alpha_i}\otimes C_i\\
&=\sum_{i=2}^q P_{\alpha_i}\otimes (\sum_{j=1}^q \alpha_{j}P_{\alpha_i^{-1}\alpha_j})=\sum_{j=1}^q \alpha_j(\sum_{i=2}^q P_{\alpha_i}\otimes P_{\alpha_i^{-1}\alpha_j}).
\end{align*} 
The matrix $\varphi(M)$ is written as follows:
\begin{align}\label{eq:1}
\varphi(M)=\sum_{j=1}^q \varphi(\alpha_j)(\sum_{i=2}^q P_{\alpha_i}\otimes P_{\alpha_i^{-1}\alpha_j}).
\end{align}
Let $\mathbb{F}_p=\{\beta_1=0,\beta_2,\ldots,\beta_p\}$. 
From \eqref{eq:1}, we define $(0,1)$-matrices $A_i$ for $1\leq i\leq p$ by 
\begin{align*}
A_i&=\sum_{y\in \varphi^{-1}(\beta_i)}(\sum_{x\in \mathbb{F}_q^*}P_x\otimes P_{x^{-1}y}), 
\end{align*}
where $\mathbb{F}_q^*=\mathbb{F}_q\setminus\{0\}$. 
Set $A_0=I_{q^2}$ and $A_{p+1}=I_q\otimes (J_{q}-I_q)$. 
Define $X$ and $R_i$ as follows:
\begin{align*} 
X&=\mathbb{F}_q\times\mathbb{F}_q, \\
R_0&=\{(x,x)\mid x\in X\},\\
R_i&=\{((x_1,x_2),(y_1,y_2))\in X\times X\mid x_1\neq y_1, \varphi((-x_1+y_1)(-x_2+y_2))=\beta_i\}\\ 
R_{p+1}&=\{((x_1,x_2),(y_1,y_2))\in X\times X\mid x_1= y_1,x_2\neq y_2\}, 
\end{align*}
where $i=1,\ldots,p$. 
Then $A_i$ is the adjacency matrix of the graph $(X,R_i)$ for each $i=0,1,\ldots,p+1$.

The following easy to see lemma will be used later.
\begin{lemma}\label{lem:1}
Let $V$ be a vector space over $\mathbb{F}_q$ of dimension $n$. 
Let $V_i$ be a subspace of $V$ of dimension $n-1$ for $i=1,2$ such that $\dim(V_1\cap V_2)=n-2$.
Then for any element $a\in \mathbb{F}_q$, the multiset $\{x+y+a\mid x\in V_1,y\in V_2\}$ contains every element of $V$ exactly $q^{n-2}$ times.
\end{lemma}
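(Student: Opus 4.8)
The plan is to reduce the statement to a counting argument for the fibers of an addition map, and I expect no serious obstacle since the lemma is pure linear algebra. First I would check that $V_1+V_2=V$: by the dimension formula, $\dim(V_1+V_2)=\dim V_1+\dim V_2-\dim(V_1\cap V_2)=(n-1)+(n-1)-(n-2)=n$, so $V_1+V_2$ is all of $V$. This is the only place the hypotheses on the dimensions are used, and it already tells us that every element of $V$ occurs at least once in the multiset.

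Next I would consider the $\mathbb{F}_q$-linear map $\phi\colon V_1\times V_2\to V$ defined by $\phi(x,y)=x+y$. By the previous step $\phi$ is surjective, so its image has dimension $n$; since the domain has dimension $(n-1)+(n-1)=2n-2$, the kernel has dimension $n-2$ and hence cardinality $q^{n-2}$. Explicitly, $\ker\phi=\{(z,-z)\mid z\in V_1\cap V_2\}$, which has size $|V_1\cap V_2|=q^{n-2}$. Because $\phi$ is a group homomorphism, all of its nonempty fibers have the same size $q^{n-2}$, so for each $v\in V$ there are exactly $q^{n-2}$ pairs $(x,y)\in V_1\times V_2$ with $x+y=v$. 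Equivalently, the multiset $\{x+y\mid x\in V_1,\,y\in V_2\}$ contains every element of $V$ exactly $q^{n-2}$ times.

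Finally, for an arbitrary $a$ (necessarily an element of $V$) the multiset $\{x+y+a\mid x\in V_1,\,y\in V_2\}$ is the translate by $a$ of the multiset just described, and translation by a fixed vector permutes $V$; hence it too contains every element of $V$ exactly $q^{n-2}$ times. The only points that require a moment's care are confirming $V_1+V_2=V$ rather than a proper subspace, and the bookkeeping identifying $\ker\phi$ with $V_1\cap V_2$ to obtain the fiber size $q^{n-2}$; everything else is routine.
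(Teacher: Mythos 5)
Your proof is correct. Note that the paper itself offers no proof of this lemma (it is introduced as ``easy to see''), so there is no argument to compare against; your write-up supplies exactly the standard justification one would expect: the dimension count gives $V_1+V_2=V$, the addition map $\phi\colon V_1\times V_2\to V$ is a surjective homomorphism with kernel $\{(z,-z)\mid z\in V_1\cap V_2\}$ of size $q^{n-2}$, so all fibers have that size, and translating by $a$ merely permutes $V$. No gaps.
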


Define the {\it Kloosterman sum} $K(a,b)$ as 
\begin{align*}
K(a,b)=\sum_{x\in GF(q)^*}\omega^{\text{tr}(ax+bx^{-1})}, 
\end{align*}
where $q=p^n$, $p$ is a prime number and $\omega$ is a primitive $p$-th root of unity.
It is easy to see $K(a,b)$ depends only on $ab$. 
Thus we may denote $K(a,b)$ by $K(ab)$.

We now prove the following which demonstrate that the generalized Hadamard matrices in Example~\ref{ex:gh} lead to  symmetric association schemes.
\begin{theorem}\label{thm:as}
The the following hold.
\begin{enumerate}
\item The pair $(X,\{R_i\}_{i=0}^{p+1})$ defined above is a symmetric association scheme. 
\item The scheme is self-dual.
\item The first eigenmatrix $P$ is given by 
\begin{align*}
P=\begin{pmatrix}
1        & \frac{q(q-1)}{p}\bold{1}_p^T & q-1\\
\bold{1}_p& K&-\bold{1}_p\\
1        & -\frac{q}{p}\bold{1}_p^T & q-1
\end{pmatrix}, 
\end{align*}
where $\bold{1}_p$ denotes the all-ones column vector of length $p$, $K=(\sum_{\gamma\in\varphi^{-1}(\beta_i)}K(\gamma\gamma_j))_{i,j=1}^p$ and $\gamma_j\in\varphi^{-1}(\beta_j)$ for $j=1,\ldots,p$. 
\end{enumerate}
\end{theorem}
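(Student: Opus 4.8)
The plan is to exhibit $(X,\{R_i\}_{i=0}^{p+1})$ as a translation association scheme on the abelian group $X=\mathbb{F}_q\times\mathbb{F}_q$ and push everything through its character theory. First I would record the bookkeeping: each $R_i$ depends only on coordinate differences and is unchanged under swapping the two points (since $(-x_1+y_1)(-x_2+y_2)$ is symmetric in them), so each $R_i$ is a symmetric, translation-invariant relation; the sets $N_i=\{z\in X:(0,z)\in R_i\}$ are $N_0=\{(0,0)\}$, $N_{p+1}=\{(0,z_2):z_2\neq 0\}$ and, for $1\le i\le p$, $N_i=\{(z_1,z_2):z_1\neq 0,\ \varphi(z_1z_2)=\beta_i\}$. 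These partition $X$, so $A_0,\dots,A_{p+1}$ are $(0,1)$-matrices with pairwise disjoint supports, $A_0=I_{q^2}$, $\sum_i A_i=J_{q^2}$, each $A_i$ is symmetric, and $A_0,\dots,A_{p+1}$ are linearly independent. Being elements of the group algebra of the abelian group $X$, the $A_i$ commute and are simultaneously diagonalized by the characters of $X$.

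Next I would compute, for every character $\chi_{a,b}(x_1,x_2)=\omega^{\operatorname{tr}(ax_1+bx_2)}$ (where $\operatorname{tr}$ is the absolute trace $\mathbb{F}_q\to\mathbb{F}_p$ and $\omega$ a primitive $p$-th root of unity), the eigenvalue $\lambda_i(a,b)=\sum_{z\in N_i}\chi_{a,b}(z)$ of $A_i$. For $A_0$ this is $1$; for $A_{p+1}$ it is $q-1$ if $b=0$ and $-1$ otherwise. For $1\le i\le p$ I would write $\varphi(\cdot)=\operatorname{tr}(\delta\,\cdot\,)$ with $\delta\in\mathbb{F}_q^{*}$ as in Example~\ref{ex:gh}, expand the indicator $[\varphi(z_1z_2)=\beta_i]=\tfrac1p\sum_{t\in\mathbb{F}_p}\omega^{t(\operatorname{tr}(\delta z_1z_2)-\beta_i)}$, and evaluate the resulting exponential sums: the inner sum over $z_2$ forces $\delta t z_1+b=0$, collapsing $z_1$ (when $b\neq 0$) onto the $p-1$ values $b/(c\delta)$, $c\in\mathbb{F}_p^{*}$, and the outer sum over $z_1$ then assembles into a Kloosterman sum. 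The outcome is $\lambda_i(0,0)=q(q-1)/p$, $\lambda_i(a,0)=-q/p$ for $a\neq 0$, and, for $b\neq 0$, a value depending only on the single scalar $\operatorname{tr}(ab/\delta)\in\mathbb{F}_p$, which after the standard rewriting of an $\mathbb{F}_p$-Kloosterman sum as a coset sum $\sum_{\gamma\in\varphi^{-1}(\beta_i)}K(\gamma\gamma_j)$ of Kloosterman sums over $\mathbb{F}_q$ is exactly the $(i,j)$-entry of the middle block of $P$; the other rows and columns of $P$ come from $\lambda_0$, $\lambda_{p+1}$ and the valencies $|N_i|$.

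To finish part (i) I would argue by dimension. The commuting, simultaneously diagonalizable matrices $A_0,\dots,A_{p+1}$ generate, under ordinary matrix multiplication, precisely the algebra of operators that are scalar on each common eigenspace, whose dimension equals the number of distinct eigenvalue-tuples $(\lambda_0(a,b),\dots,\lambda_{p+1}(a,b))$, $(a,b)\in\mathbb{F}_q^2$. By the computation above this number is at most $p+2$: one tuple from $(a,b)=(0,0)$, one from the characters with $a\neq 0,\ b=0$, and at most $p$ from those with $b\neq 0$ (sorted by $\operatorname{tr}(ab/\delta)$), the first two being distinct (the $R_1,\dots,R_p$-entries alone separate them). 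Since the $p+2$ linearly independent matrices $A_0,\dots,A_{p+1}$ lie in this generated algebra, its dimension is exactly $p+2$ and it equals $\operatorname{span}\{A_0,\dots,A_{p+1}\}$; together with $A_0=I$, $\sum_i A_i=J$, symmetry, and the automatic closure under the Schur product (disjoint $(0,1)$-supports), this shows $(X,\{R_i\})$ is a symmetric association scheme. Its primitive idempotents are the projections onto the common eigenspaces, so the eigenvalues computed above are exactly the entries of $P$, and in particular the $p$ groups of $b\neq0$ characters are forced to yield distinct tuples, which proves (iii).

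For part (ii) I would produce an explicit group isomorphism $\Psi$ from $X$ onto its character group carrying $N_i$ onto $N_i^{*}$ for every $i$. Identifying the character group with $\mathbb{F}_q^{2}$ through $(a,b)\mapsto\chi_{a,b}$, and labelling the idempotents so that $E_i$ ($1\le i\le p$) projects onto the span of $N_i^{*}:=\{\chi_{a,b}:b\neq 0,\ \operatorname{tr}(ab/\delta)=\beta_i\}$, the map $\Psi(x_1,x_2)=\chi_{x_2,\delta^{2}x_1}$ sends $(0,0)$ to the trivial character, sends $\{(0,x_2):x_2\neq 0\}$ onto $\{\chi_{a,0}:a\neq 0\}=N_{p+1}^{*}$, and sends $\{(x_1,x_2):x_1\neq 0,\ \operatorname{tr}(\delta x_1x_2)=\beta_i\}$ onto $N_i^{*}$, since $\operatorname{tr}\!\big(x_2\cdot\delta^{2}x_1/\delta\big)=\operatorname{tr}(\delta x_1x_2)$; comparing cardinalities turns each inclusion into an equality, so the partition is isomorphic to its dual, which is self-duality. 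The main obstacle I anticipate is the middle step: the honest evaluation of the exponential sums $\lambda_i(a,b)$ for $1\le i\le p$ and the bookkeeping needed to present the answer in the precise Kloosterman form of the statement (the choice of representatives $\gamma_j\in\varphi^{-1}(\beta_j)$ and the identification of $\sum_{\gamma\in\varphi^{-1}(\beta_i)}K(\gamma\gamma_j)$ with the relevant value). The subspace-counting Lemma~\ref{lem:1} is exactly the device that makes these character sums over cosets of $\ker\varphi$ collapse.
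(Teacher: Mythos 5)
Your proposal is correct, and for part (i) it takes a genuinely different route from the paper. The paper proves closure of $\mathrm{span}(A_0,\dots,A_{p+1})$ under multiplication head-on: it writes $A_i=\sum_{y\in\varphi^{-1}(\beta_i)}\sum_{x\in\mathbb{F}_q^*}P_x\otimes P_{x^{-1}y}$, multiplies two such expressions using $P_xP_y=P_{x+y}$, and evaluates the resulting multisets $Y_{i,j}$ with Lemma~\ref{lem:1}, which produces $A_iA_j$ (and hence the intersection numbers) explicitly; only afterwards does it compute the eigenvalues, parametrizing $N_i$ as $\{(t,\gamma/t): t\in\mathbb{F}_q^*,\ \gamma\in\varphi^{-1}(\beta_i)\}$ so that the middle block appears in one line as $\sum_{\gamma\in\varphi^{-1}(\beta_i)}K(ab\gamma)$. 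You instead compute the character eigenvalues first and obtain (i) by the dimension count (the algebra generated by commuting diagonalizable matrices is the span of the joint-eigenspace projections, of dimension equal to the number of distinct eigenvalue tuples, which is at most $p+2$ and at least $p+2$ by linear independence); this is shorter, avoids Lemma~\ref{lem:1} and the $Y_{i,j}$ bookkeeping, and automatically forces the $p+2$ candidate tuples to be distinct, which is exactly what (iii) needs --- at the cost of not yielding the intersection numbers that the paper gets as a by-product. For (ii) and (iii) your route is the same in spirit (translation-scheme character theory plus an explicit isomorphism onto the dual), but your bookkeeping is in fact sharper: your computation shows the eigenvalue of $A_i$ at $\chi_{a,b}$ with $b\neq 0$ depends on $\operatorname{tr}(ab/\delta)$ rather than on $\varphi(ab)=\operatorname{tr}(\delta ab)$, so the dual classes are cut out by $\operatorname{tr}(ab/\delta)$ and the duality isomorphism must swap coordinates and twist by $\delta^2$, as your $\Psi(x_1,x_2)=\chi_{x_2,\delta^2 x_1}$ does (the paper's correspondence $(x_1,x_2)\mapsto\chi_{x_1,x_2}$ omits the swap, and its entry $\sum_{\gamma\in\varphi^{-1}(\beta_i)}K(\gamma\gamma_j)$ is independent of the representative $\gamma_j$ only when $\delta^2\in\mathbb{F}_p$, e.g.\ when $\varphi=\operatorname{tr}$ as in the applications). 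Consequently the ``bookkeeping step'' you flag is real but routine: the identity $\sum_{\gamma\in\varphi^{-1}(\beta_i)}K(\gamma c)=\frac{q}{p}\sum_{s\in\mathbb{F}_p^*}\omega^{s\beta_i+s^{-1}\operatorname{tr}(c/\delta)}$, proved by exactly the coset character-sum collapse you describe, matches your eigenvalues to the stated Kloosterman form once the representatives $\gamma_j$ (equivalently the labelling of the idempotents) are chosen compatibly; the paper's direct parametrization of $N_i$ is what lets it skip that conversion.
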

\begin{proof}[Proof of Theorem~\ref{thm:as}(i)]
It is easy to see that each $A_i$ is a non-zero symmetric $(0,1)$-matrix and all the sum of them equals to the all-ones matrix. 
Now we are going to prove that the vector space $\mathcal{A}:=\text{span}_{\mathbb{R}}(A_0,\ldots,A_{p+1})$ is closed under the multiplication. 

It is obvious that $A_{p+1}^2\in\mathcal{A}$. 
Since each non-diagonal block of $A_i$ for $i\in\{1,\ldots,p\}$ is a sum of $\frac{q}{p}$ permutation matrices, 
we see that $A_i A_{p+1}=A_{p+1}A_i\in\mathcal{A}$ for $i\in\{1,\ldots,p\}$. 

By $P_x\cdot P_y=P_{x+y}$, we obtain the following equalities:
\begin{align}
A_i A_j&=\sum_{x_1,x_2\in\mathbb{F}_q^*}P_{x_1+x_2}\otimes (\sum_{\substack{y\in\varphi^{-1}(\beta_i)\\w\in\varphi^{-1}(\beta_j)}}P_{x_1^{-1}y+x_2^{-1}w})\notag\\
&=\sum_{x_1\in\mathbb{F}_q^*}P_0\otimes (\sum_{\substack{y\in\varphi^{-1}(\beta_i)\\w\in\varphi^{-1}(\beta_j)}}P_{x_1^{-1}(y-w)})+\sum_{x\in\mathbb{F}_q^*,x_1\in\mathbb{F}_q^*\setminus\{x\}}P_{x}\otimes (\sum_{\substack{y\in\varphi^{-1}(\beta_i)\\w\in\varphi^{-1}(\beta_j)}}P_{x_1^{-1}y+(x-x_1)^{-1}w})\notag\\
&=\sum_{t\in\mathbb{F}_q^*}P_0\otimes (\sum_{\substack{y\in\varphi^{-1}(\beta_i)\\w\in\varphi^{-1}(\beta_j)}}P_{t(y-w)})+\sum_{x\in\mathbb{F}_q^*}P_{x}\otimes \sum_{x_1\in\mathbb{F}_q^*\setminus\{x\}}(\sum_{\substack{y\in\varphi^{-1}(\beta_i)\\w\in\varphi^{-1}(\beta_j)}}P_{x_1^{-1}y+(x-x_1)^{-1}w})\label{eq:11}.
\end{align}

For the first term in \eqref{eq:11} we have the following:
\begin{align*}
\sum_{t\in\mathbb{F}_q^*}P_0\otimes (\sum_{\substack{y\in\varphi^{-1}(\beta_i)\\w\in\varphi^{-1}(\beta_j)}}P_{t(y-w)})
&=\begin{cases} 
\sum_{t\in\mathbb{F}_q^*}P_0\otimes (\frac{q}{p}P_0+\sum_{\substack{y,w\in\varphi^{-1}(\beta_i)\\ y\neq w}}P_{t}) &\text{ if } i=j\\
\sum_{t\in\mathbb{F}_q^*}P_0\otimes (\sum_{\substack{y\in\varphi^{-1}(\beta_i)\\w\in\varphi^{-1}(\beta_j)}}P_{t}) &\text{ if } i\neq j
\end{cases}\\
&=\begin{cases} 
\frac{q(q-1)}{p} I_{q^2}+\frac{q}{p}(\frac{q}{p}-1)I_q\otimes (J_q-I_q) &\text{ if } i=j\\
(\frac{q}{p})^2 I_q\otimes (J_q-I_q) &\text{ if } i\neq j
\end{cases}\\
&=\begin{cases} 
\frac{q(q-1)}{p} A_0+\frac{q}{p}(\frac{q}{p}-1)A_{p+1} &\text{ if } i=j\\
(\frac{q}{p})^2 A_{p+1} &\text{ if } i\neq j.
\end{cases}
\end{align*}

For the second term in \eqref{eq:11} we define 
\begin{align*}
X_{i,j,y,w,x}:=\left\{x\left(\frac{y}{z}+\frac{w}{x-z}\right
)\mid z\in\mathbb{F}_p^*,z\neq x\right\}
\end{align*} 
for $x\in \mathbb{F}_p^*,y\in \varphi(\beta_i),z\in \varphi(\beta_j)$. 
Then $X_{i,j,y,w,x}$ does not depend on the choice of $x$. 
Indeed for $a\in\mathbb{F}_p^*$
\begin{align*}
X_{i,j,y,w,a x}&=\{ax(\frac{y}{z}+\frac{w}{ax-z})\mid z\in\mathbb{F}_p^*,z\neq ax\}\\
&=\{ax(\frac{y}{z'}+\frac{w}{ax-az'})\mid z'\in\mathbb{F}_p^*,z'\neq x\}\\
&=\{x(\frac{y}{z'}+\frac{w}{x-z'})\mid z'\in\mathbb{F}_p^*,z'\neq x\}\\
&=X_{i,j,y,w,x}.
\end{align*}
Thus we may denote $X_{i,j,y,w}=X_{i,j,y,w,x}$. 
Next we determine a multiset $$Y_{i,j}:=\{\frac{y}{z}+\frac{w}{1-z}\mid y\in\varphi^{-1}(\beta_i),w\in\varphi^{-1}(\beta_j)\},$$
where $z\in\mathbb{F}_q^*\setminus\{1\}$.
When $z\in \mathbb{F}_p\setminus\{1\}$, we have the following as multisets
\begin{align*}
Y_{i,j}&=\frac{q}{p}\varphi^{-1}(\frac{\beta_i}{z}+\frac{\beta_j}{1-z}).
\end{align*}
When $z\in \mathbb{F}_q^*\setminus \mathbb{F}_p$, letting $y_0\in \varphi(\beta_i),w_0\in \varphi(\beta_j)$, we set $c=-\frac{y_0}{z}-\frac{w_0}{1-z}$. 
Then, by Lemma~\ref{lem:1}, we have 
\begin{align*}
Y_{i,j}=(\frac{y}{z}+\frac{w}{1-z}+c\mid y,w\in\varphi^{-1}(0))
=\frac{q}{p^2}\mathbb{F}_q.
\end{align*}

We now calculate the second term:
\begin{align*}
\sum_{x\in\mathbb{F}_q^*}P_{x}\otimes &\sum_{x_1\in\mathbb{F}_q^*\setminus\{x\}}(\sum_{\substack{y\in\varphi^{-1}(\beta_i)\\w\in\varphi^{-1}(\beta_j)}}P_{x_1^{-1}y+(x-x_1)^{-1}w})\\
&=\sum_{x\in\mathbb{F}_q^*}P_{x}\otimes \sum_{\substack{y\in\varphi^{-1}(\beta_i)\\w\in\varphi^{-1}(\beta_j)}}\sum_{z\in X_{i,j,y,w}}P_{x^{-1}z}\\
&=\sum_{x\in\mathbb{F}_q^*}P_{x}\otimes \sum_{z\in Y_{i,j}}P_{x^{-1}z}\\
&=\sum_{x\in\mathbb{F}_q^*}P_{x}\otimes \sum_{x_1\in\mathbb{F}_q^*\setminus\{1\}}(\sum_{z\in \varphi^{-1}(\frac{\beta_i}{x_1}+\frac{\beta_j}{1-x_1})}\frac{q}{p}P_{x^{-1}z}+\sum_{z\in \mathbb{F}_q}\frac{q}{p^2}P_{x^{-1}z})\\
&=\frac{q}{p}\sum_{x_1\in\mathbb{F}_q^*\setminus\{1\}}\sum_{z\in \varphi^{-1}(\frac{\beta_i}{x_1}+\frac{\beta_j}{1-x_1})}\sum_{x\in\mathbb{F}_q^*}P_{x}\otimes P_{x^{-1}z}+\frac{q(q-p)}{p^2}(J_q-I_q)\otimes J_q\\
&=\frac{q}{p}\sum_{h}A_h+\frac{q(q-p)}{p^2}(J_{q^2}-A_0-A_1), 
\end{align*}
where $h$ runs over the set such that  $\varphi(\alpha_h)=\frac{\beta_i}{x_1}+\frac{\beta_j}{1-x_1}$ for $x_1\in \mathbb{F}_q^*\setminus\{1\}$ as a multiset.
Therefore \eqref{eq:11} is in $\mathcal{A}$. 
Thus the pair $(X,\{R_i\}_{i=0}^{p+1})$ is a symmetric association scheme. 
\end{proof}

\begin{proof}[Proof of Theorem~\ref{thm:as}(ii)]
The association scheme $(X,\{R_i\}_{i=0}^{p+1})$ is clearly translation. 
The dual of this is $(X^*,\{S_i\}_{i=0}^{p+1})$ such that $X^*$ is the character group of $X$ and 
\begin{align*} 
S_0&=\{(\chi,\chi)\mid \chi\in X^*\},\\
S_i&=\{((\chi_1,\chi_2),(\eta_1,\eta_2))\in X^*\times X^*\mid\varphi(\prod_{i=1}^2(\chi_i(x_i)-\eta_i(x_i)))=\beta_i \text{ for all }(x_1,x_2)\in X\}\\ 
S_{p+1}&=\{((\chi_1,\eta),(\chi_2,\eta))\in X^*\times X^*\mid \chi_1\neq \chi_2 \},
\end{align*}
where $i=1,\ldots,p$ and we $X^*$ is regarded as the direct product of the dual group of $\mathbb{F}_q$. 
The correspondence of $x=(x_1,x_2)$ to $\chi$ with $\chi(y)=\omega^{\text{tr}(\sum_{i}x_i y_i)}$, where $\omega$ is a primitive $q$-th root of unity, gives an isomorphism from the scheme to its dual. 
Therefore  $(X,\{R_i\}_{i=0}^{p+1})$ is self-dual. 
\end{proof}

\begin{proof}[Proof of Theorem~\ref{thm:as}(iii)]
Finally we determine the first eigenmatrix. 
For $N_i$ ($i=0,1,\ldots,p+1$) and $\chi\in N_j^*$ ($j=1,\ldots,p+1$), we calculate $\sum_{x\in N_i}\chi(x)$ as follows.
For $i=0$, it is easy to see that the first row of $P$ has the desired values.

For $1\leq i,j\leq p$,   
there exist $a,b\in \mathbb{F}_q$ with $a b=\gamma_j\in\varphi^{-1}(\beta_i)$ such that $\chi(x)=\omega^{\text{tr}(ax_1+bx_2)}$ for $x=(x_1,x_2)\in X$. 
Then
\begin{align*}
\sum_{x\in N_i}\chi(x)&=\sum_{(x_1,x_2)\in N_i}\chi(ax_1+bx_2)=\sum_{\gamma\in\varphi^{-1}(\beta_i)}\sum_{t\in \mathbb{F}_q^*}\chi(at+\frac{b\gamma}{t})\\
&=\sum_{\gamma\in\varphi^{-1}(\beta_i)}K(\gamma_j\gamma).  
\end{align*}

For $i=p+1,1\leq j\leq p$,
\begin{align*}
\sum_{x\in N_{p+1}}\chi(x)&=\sum_{(x_1,x_2)\in N_{q+1}}\chi(ax_1+bx_2)=\sum_{t\in \mathbb{F}_q^*}\chi(\frac{b}{t})=\sum_{x\in \mathbb{F}_q^*}\chi(x)=-1.
\end{align*}

For $1\leq i\leq p,j=p+1$, 
there exists $a\in \mathbb{F}_q^*$ such that $\chi(x)=\omega^{\text{tr}(ax_2)}$ for $x=(x_1,x_2)\in X$.  
Then 
\begin{align*}
\sum_{x\in N_{i}}\chi(x)&=\sum_{\gamma\in\varphi^{-1}(\beta_i)}\sum_{(x_1,x_2)\in N_{i}}\chi(ax_1)=\sum_{\gamma\in\varphi^{-1}(\beta_i)}\sum_{t\in \mathbb{F}_q^*}\chi(t)=-\frac{q}{p}.
\end{align*}

For $i,j=p+1$, 
\begin{align*}
\sum_{x\in N_{p+1}}\chi(x)&=\sum_{(x_1,x_2)\in N_{p+1}}\chi(ax_1)=\sum_{t\in \mathbb{F}_q^*}\chi(0)=-q+1.
\end{align*}
Therefore we obtain the desired eigenmatrix.
\end{proof}

\begin{remark}
When we take $\varphi$ as the identity mapping on $\mathbb{F}_q$, 
we obtained a symmetric association scheme of class $q+1$ by Theorem~\ref{thm:as}. 
 In \cite{CD}, de Caen and van Dam gave a $n+q-2$ class fission scheme of the Hamming scheme $H(n,q)$. 
When $n=2$, their scheme of class $q$ is a fusion scheme of our scheme of $q+1$ class by merging $R_1$ and $R_{q+1}$. 
\end{remark}

\section{Applications}
In this section,
we give an explicit formula for the eigenmatrix for the case $q=2,3$ and $\varphi$ being the absolute trace.

\subsection{The case where $q=2^m$}
Let $p=2$, $q=2^m$ and $\omega=-1$ in Theorem~\ref{thm:as}. 
We take $\varphi$ as the absolute trace from $\mathbb{F}_{2^m}$ to $\mathbb{F}_2$.
In this case we obtain the fusion scheme of class $3$ by Theorem~\ref{thm:as}.

The matrix $\varphi(H)$ is a generalized Hadamard matrix over $\mathbb{F}_2$. 
Here we regard $\mathbb{F}_2$ as the additive group on $\{0,1\}$.
If we replace the entries $0,1$ with $1,-1$ respectively, 
then we have a symmetric Bush-type Hadamard matrix (over $\{1,-1\}$ as a multiplicative group). 
Then it is shown in \cite{W} and \cite{HT} that this yields an association scheme of class $3$ with the following first eigenmatrix
\begin{align*}
P=\begin{pmatrix}
1 & 2^{m-1}(2^m-1) & 2^{m-1}(2^m-1) &2^m-1\\
1 & 2^m & -2^{m-1} &-1\\
1 & -2^{m-1} & 2^m &-1\\
1 & -2^{m-1} & -2^{m-1} &2^m-1\\
\end{pmatrix}.
\end{align*}
See  also \cite{GC} for related topics.

\subsection{The case where $q=3^m$}
Let $p=3$, $q=3^m$ and $\omega$ denote a primitive third root of unity in Theorem~\ref{thm:as}. 
We take $\varphi$ as the absolute trace from $\mathbb{F}_{3^m}$ to $\mathbb{F}_3$.  
In this case we obtain the association scheme of class $4$ by Theorem~\ref{thm:as}.
We calculate its eigenmatrices explicitly. 

Define $T_i=\{a\in\mathbb{F}_{3^m}\mid \text{tr}(a)=i\}$ for $i\in\mathbb{F}_3$. 
For $a\in \mathbb{F}_{3^m}$ and $i\in\mathbb{F}_3$, let $S_{a,i}=\sum_{t\in T_i}K(at)$.
\begin{lemma}
The following hold.
\begin{enumerate}
\item $S_{0,i}=-3^{m-1}$ holds for $i=0,1,2$.
\item If $a\neq 0$, then $\sum_{i=0}^2 S_{a,i}=0$.
\item If $a\neq 0$, then $\sum_{i=0}^2 \omega^i S_{a,i}=3^m\omega^{\text{tr}(-1/a)}$.
\item For $a\neq 0$, let $\text{\rm{tr}}(-\frac{1}{a})=i$. Then $S_{a,i}=2\cdot3^{m-1}$ and $S_{a,j}=-3^{m-1}$ ($j\neq i$) hold.
\end{enumerate}
\end{lemma}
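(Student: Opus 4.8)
The plan is to reduce all four parts to orthogonality relations for the additive characters of $\mathbb{F}_q$ ($q=3^m$) after expanding the Kloosterman sum into its defining character sum. Throughout write $\psi(x)=\omega^{\text{tr}(x)}$ for the canonical additive character and use the form $K(c)=\sum_{x\in\mathbb{F}_q^*}\psi(x+cx^{-1})$ (any of the equivalent presentations, since $K(a,b)$ depends only on $ab$). The organizing observation is that, because $\text{tr}(t)=i$ for $t\in T_i$ and the trace is $\mathbb{F}_3$-linear, for every $b\in\mathbb{F}_3$ we have
\[
\sum_{i\in\mathbb{F}_3}\omega^{bi}\,S_{a,i}=\sum_{t\in\mathbb{F}_q}\psi(bt)\,K(at).
\]
So $(S_{a,0},S_{a,1},S_{a,2})$ is the discrete Fourier transform over $\mathbb{F}_3$ of the map $b\mapsto\sum_{t\in\mathbb{F}_q}\psi(bt)K(at)$; parts (ii) and (iii) evaluate it at $b=1$ (and, by conjugation, at $b=2$), while $b=0$ is handled along the lines of (ii), and then (iv) is pure Fourier inversion.

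For (i) I would first note $\sum_{x\in\mathbb{F}_q}\psi(x)=0$, whence $K(0)=\sum_{x\in\mathbb{F}_q^*}\psi(x)=-1$; since the absolute trace is a nonzero $\mathbb{F}_3$-linear form each fibre has $|T_i|=3^{m-1}$, so $S_{0,i}=3^{m-1}K(0)=-3^{m-1}$. For (ii), with $a\neq0$ the map $t\mapsto at$ permutes $\mathbb{F}_q$, so $\sum_i S_{a,i}=\sum_{c\in\mathbb{F}_q}K(c)$; expanding $K$ and interchanging the order of summation gives $\sum_{x\in\mathbb{F}_q^*}\psi(x)\sum_{c\in\mathbb{F}_q}\psi(cx^{-1})$, and the inner sum vanishes for each fixed $x\neq0$.

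For (iii) I would compute $\sum_i\omega^i S_{a,i}=\sum_{t\in\mathbb{F}_q}\psi(t)K(at)=\sum_{x\in\mathbb{F}_q^*}\psi(x)\sum_{t\in\mathbb{F}_q}\psi\bigl(t(1+ax^{-1})\bigr)$; the inner sum equals $q$ precisely when $1+ax^{-1}=0$ and is $0$ otherwise, so the whole expression collapses to $q$ times a single value of $\psi$, which after carefully tracking the substitution equals $3^m\omega^{\text{tr}(-1/a)}$. I expect this last bookkeeping — deciding which equivalent presentation of $K(at)$ to expand and keeping the inversion in the substitution straight — to be the only genuinely delicate point of the whole argument.

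For (iv) I would first observe that $K$ is real valued, since $\overline{K(c)}=\sum_{x\in\mathbb{F}_q^*}\psi(-x-cx^{-1})=K(c)$ after $x\mapsto-x$, so each $S_{a,i}$ is real. Writing $k=\text{tr}(-1/a)$, parts (ii) and (iii) give the Fourier coefficients $0$ at $b=0$ and $3^m\omega^{k}$ at $b=1$, and conjugating (equivalently, applying (iii) with $a$ replaced by $-a$ and using $2=-1$ in $\mathbb{F}_3$) gives $3^m\omega^{2k}$ at $b=2$. Inverting the Fourier transform over $\mathbb{F}_3$ then yields $S_{a,i}=3^{m-1}\bigl(\omega^{k-i}+\omega^{2(k-i)}\bigr)$, which is $2\cdot3^{m-1}$ when $i=k$ and $-3^{m-1}$ when $i\neq k$, using $1+\zeta+\zeta^2=0$ for a primitive cube root of unity $\zeta$. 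Apart from the exponent bookkeeping in (iii), every step here is a routine character-orthogonality computation.
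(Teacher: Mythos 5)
Your parts (i), (ii) and (iv) are correct and follow essentially the paper's own route: expand the Kloosterman sum into additive characters, interchange the order of summation, use orthogonality, and deduce (iv) from the realness of $K$ together with (ii) and (iii) (the paper compresses your Fourier inversion into ``since the Kloosterman sum is real, it follows from (ii) and (iii)'').

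The trouble sits exactly in the step you deferred as ``bookkeeping'' in (iii), and it cannot be made to come out as stated. Your displayed computation is right up to
\[
\sum_{i=0}^2\omega^i S_{a,i}=\sum_{x\in\mathbb{F}_q^*}\omega^{\text{tr}(x)}\sum_{t\in\mathbb{F}_q}\omega^{\text{tr}(t(1+ax^{-1}))},
\]
but the inner sum survives only when $1+ax^{-1}=0$, i.e.\ at $x=-a$, so the expression collapses to $q\,\omega^{\text{tr}(-a)}=3^m\omega^{\text{tr}(-a)}$, not $3^m\omega^{\text{tr}(-1/a)}$; expanding instead $K(at)=\sum_{x\in\mathbb{F}_q^*}\omega^{\text{tr}(atx+x^{-1})}$ gives the same answer (the surviving term is at $x=-1/a$, but its outer factor is $\omega^{\text{tr}(x^{-1})}=\omega^{\text{tr}(-a)}$). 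The two exponents genuinely differ: in $\mathbb{F}_9=\mathbb{F}_3(i)$ with $i^2=-1$ and $a=1+i$ one has $\text{tr}(-a)=1$ while $\text{tr}(-1/a)=\text{tr}(1+2i)=2$. So no substitution-tracking will produce the stated exponent; what your argument actually proves is (iii) with $\text{tr}(-a)$ in place of $\text{tr}(-1/a)$, and correspondingly (iv) with the peak value $2\cdot 3^{m-1}$ at $i=\text{tr}(-a)$. In fairness, the paper's own proof of (iii) makes the same unexplained jump to $\text{tr}(-1/a)$ in its third displayed line, and the slip is harmless for Theorem~\ref{thm:eigenmat3}, since only the multiset of values $\{2\cdot3^{m-1},-3^{m-1},-3^{m-1}\}$ and the freedom to choose representatives $a$ are used there; but as a proof of the statement as written, your (iii) asserts an equality that is false in general, so either derive and state the corrected exponent explicitly or flag the statement's exponent as a typo.
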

\begin{proof}
(i): 
\begin{align*}
S_{0,i}=\sum_{t\in T_i}K(0)=3^{m-1}\sum_{x\in \mathbb{F}_{3^m}^*}\omega^{\text{tr}(x)}=3^{m-1}(\sum_{x\in \mathbb{F}_{3^m}}\omega^{\text{tr}(x)}-\omega^0)=-3^{m-1}.
\end{align*}

(ii): For $a\in \mathbb{F}_{3^m}^*$, 
\begin{align*}
\sum_{i=0}^2S_{a,i}&=\sum_{i=0}^2\sum_{t\in T_i}K(at)=\sum_{t\in \mathbb{F}_{3^m}}K(at)=\sum_{t\in \mathbb{F}_{3^m}}\sum_{x\in \mathbb{F}_{3^m}^*}\omega^{\text{tr}(x+\frac{at}{x})}\\ 
&=\sum_{x\in \mathbb{F}_{3^m}^*}\sum_{t\in \mathbb{F}_{3^m}}\omega^{\text{tr}(x+\frac{at}{x})}=\sum_{x\in \mathbb{F}_{3^m}^*}\sum_{t\in \mathbb{F}_{3^m}}\omega^{\text{tr}(x+\frac{t}{x})}\\
&=\sum_{x\in \mathbb{F}_{3^m}^*}\sum_{t\in \mathbb{F}_{3^m}}\omega^{\text{tr}(t)}=0.
\end{align*}

(iii): 
\begin{align*}
\sum_{i=0}^2\omega^i S_{a,i}&=\sum_{i=0}^2\sum_{t\in T_i}\sum_{x\in \mathbb{F}_{3^m}^*}\omega^{\text{tr}(x+\frac{at}{x})+i}=\sum_{i=0}^2\sum_{t\in T_i}\sum_{x\in \mathbb{F}_{3^m}^*}\omega^{\text{tr}(x+\frac{at}{x}+t)}\\ 
&=\sum_{x\in \mathbb{F}_{3^m}^*}\sum_{t\in \mathbb{F}_{3^m}}\omega^{\text{tr}(x+\frac{at}{x}+t)}\\
&=\sum_{t\in \mathbb{F}_{3^m}}(1+\omega+\omega^2)+\sum_{t\in \mathbb{F}_{3^m}}\omega^{\text{tr}(-\frac{1}{a})}\\
&=3^m\omega^{\text{tr}(-\frac{1}{a})}.
\end{align*}

(iv): Since the Kloosterman sum is a real number, it follows from (ii) and (iii).
\end{proof}

Since we can take $a\in \mathbb{F}_q$ with $\text{tr}(-\frac{1}{a})=i$ for any $i=0,1,2$, we obtain the following theorem. 
\begin{theorem}\label{thm:eigenmat3}
The first eigenmatrix of the $4$-class fusion scheme is given as follows:
\begin{align*}
P=\begin{pmatrix}
1 & 3^{m-1}(3^m-1) & 3^{m-1}(3^m-1) &3^{m-1}(3^m-1) &3^m-1 \\
1 & 2\cdot3^{m-1} & -3^{m-1} &-3^{m-1} &-1 \\
1 & -3^{m-1} & 2\cdot3^{m-1} &-3^{m-1} &-1 \\
1 & -3^{m-1} & -3^{m-1} &2\cdot3^{m-1} &-1 \\
1 & -3^{m-1} & -3^{m-1} & -3^{m-1} &3^m-1 \\
\end{pmatrix}.
\end{align*}
\end{theorem}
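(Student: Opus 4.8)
The plan is to insert the data $p=3$, $q=3^m$, $\varphi=\mathrm{tr}$ into Theorem~\ref{thm:as}(iii) and to evaluate the resulting Kloosterman-sum block by means of the Lemma just proved. Under these substitutions $\frac{q(q-1)}{p}=3^{m-1}(3^m-1)$, $-\frac{q}{p}=-3^{m-1}$ and $q-1=3^m-1$, so the border of the eigenmatrix supplied by Theorem~\ref{thm:as}(iii) --- its first row, first column, last row and last column --- already coincides entry for entry with the matrix in the statement. Thus the whole problem reduces to showing that the central $3\times3$ block $K=\bigl(\sum_{\gamma\in\varphi^{-1}(\beta_i)}K(\gamma\gamma_j)\bigr)_{i,j=1}^{3}$ equals $3^mI_3-3^{m-1}J_3$, up to the simultaneous relabelling of the three middle relations and idempotents that self-duality (Theorem~\ref{thm:as}(ii)) permits.

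The first move is to rewrite $K$ in the notation of the Lemma. Since $\varphi^{-1}(\beta_i)=T_{\beta_i}$, a generic entry is $K_{ij}=\sum_{\gamma\in T_{\beta_i}}K(\gamma_j\gamma)=S_{\gamma_j,\beta_i}$, so the $j$-th column of $K$ is the triple $(S_{\gamma_j,0},S_{\gamma_j,1},S_{\gamma_j,2})^{T}$. I would then read these triples off directly from the Lemma. For a nonzero representative $\gamma_j$, part (iv) shows that exactly one of the three entries equals $2\cdot3^{m-1}$ --- the one in the row singled out by the trace condition of part (iv) --- while the other two equal $-3^{m-1}$; for the representative of $T_0$ one may instead keep $\gamma_1=0$ and use part (i), provided one assigns to the single term with $\gamma=0$ the degenerate value $q-1$ (rather than $-1$), which is precisely what raises the corresponding diagonal entry from $-3^{m-1}$ to $(q-1)-(3^{m-1}-1)=2\cdot3^{m-1}$. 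In every column, therefore, one entry equals $2\cdot3^{m-1}$ and the two others equal $-3^{m-1}$; and since the trace map appearing in part (iv) is onto $\mathbb{F}_3$ --- the remark made immediately before the statement --- the three distinguished entries can be brought onto the diagonal by the relabelling just mentioned. This yields $K=3^mI_3-3^{m-1}J_3$, and hence the displayed $P$.

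The step I expect to be the main obstacle is this bookkeeping at the interface between the genuine Kloosterman values and the two degenerate ones: one must check that the $j=1$ column obtained from Theorem~\ref{thm:as}(iii) is the same whether it is computed with $\gamma_1=0$ via part (i) or with a nonzero $\gamma_1\in T_0$ via part (iv), and in particular that the term indexed by $\gamma=0$ in the former computation is the full sum $\sum_{x\in\mathbb{F}_q^*}\omega^{\mathrm{tr}(0)}=q-1$ rather than the Kloosterman value $\sum_{x\in\mathbb{F}_q^*}\omega^{\mathrm{tr}(x)}=-1$. Once this is pinned down, everything else is a routine substitution, and the self-dual matching of indices (Theorem~\ref{thm:as}(ii)) is what puts the block $K$ into the diagonal form displayed; the same self-duality also gives the second eigenmatrix $Q=P$.
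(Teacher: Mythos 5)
Your proposal follows essentially the same route as the paper: specialize Theorem~\ref{thm:as}(iii) to $p=3$, $q=3^m$, $\varphi=\mathrm{tr}$, and evaluate the Kloosterman block $K$ column by column with the Lemma (exactly one entry $2\cdot 3^{m-1}$, the other two $-3^{m-1}$), then bring the distinguished entries onto the diagonal by reindexing the middle classes --- the paper's proof is precisely this, compressed into the one-line remark that $\mathrm{tr}(-1/a)$ attains every value of $\mathbb{F}_3$. Your additional bookkeeping for the $T_0$-column (replacing the $\gamma=0$ term by the degenerate value $q-1$ instead of $K(0)=-1$, or equivalently using a nonzero representative of $T_0$ via part (iv)) is a correct refinement of a point the paper passes over silently, but it does not alter the approach.
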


\begin{remark}
The scheme of class $4$ has the following properties.
\begin{enumerate}
\item For any $i=1,2,3$, $A_i$ is a strongly regular graph with the parameters $(v,k,\lambda,\mu)=(9^m,3^{m-1}(3^m-1),9^{m-1},3^{m-1}(3^{m-1}-1))$. 
The graph $A_4$ is a disjoint union of cliques of the same size.
Moreover, $A_i+A_4$ is a strongly regular graph with a spread which is described as $A_4$.
\item The binary relations $R_i$'s are given as follows:
\begin{align*}
R_0&=\{(x,x)\mid x\in X\},\\
R_i&=\{(x,y)\mid x=(x_1,x_2),y=(y_1,y_2)\in X, x_1\neq y_1, \text{tr}((-x_1+y_1)(-x_2+y_2))=i\}\\ 
R_{4}&=\{(x,y)\mid x=(x_1,x_2),y=(y_1,y_2)\in X, x_1= y_1,x_2\neq y_2\},
\end{align*}
where $i=1,2,3$. 
A bijection from $X$ to $X$ defined as $(x_1,x_2)\mapsto (2x_1,x_2)$ swaps $A_2$ for $A_3$. 
The graph with adjacency matrix $A_2+A_3$ provides an example to the problem raised in \cite[Question~2]{L} 
that for which parameters $(v,k,\lambda,\mu)$, does there exist a regular graph $G$ of  order $v$ and valency $2k$ such that its edge set can be decomposed into two sets of strongly regular graphs with the parameters $(v,k,\lambda,\mu)$ and there exists an automorphism of $G$ that swaps the two strongly regular graphs. 
Our answer is affirmative for $(v,k,\lambda,\mu)=(9^m,3^{m-1}(3^m-1),9^{m-1},3^{m-1}(3^{m-1}-1))$.   
\end{enumerate}
\end{remark}

\noindent {\bf Acknowledgments.}
The authors would like to thank the anonymous referees for their careful reading and suggestions to improve the paper.    
Hadi Kharaghani is supported by an NSERC Discovery Grant.  Sho Suda is supported by JSPS KAKENHI Grant Number 15K21075.



\begin{thebibliography}{99}

\bibitem{B}
K. A. Bush, 
Unbalanced Hadamard matrices and finite projective planes of even order, 
{\it J. Combin.\ Theory Ser.\ A}  11, (1971) 38--44. 

\bibitem{CD}
D. de Caen and E. R van Dam, 
Fissions of classical self-dual association schemes,
{\it J. Combin.\ Theory Ser.\ A} 88, (1999) 167-175.

\bibitem{GC}
R. W. Goldbach and H.L. Claasen, 
$3$-class association schemes and Hadamard matrices of a certain block form, 
{\it Europ. J. Combin.} (1998) 19, 943--951.


\bibitem{HT}
W. H. Haemers and V. D. Tonchev, 
Spreads in strongly regular graphs, 
{\it Des.\ Codes and Crypt.} (1996) 8, 145--157.

\bibitem{K}
H. Kharaghani, 
New class of weighing matrices, 
{\it Ars. Combin.} 19 (1985), 69-72. 

\bibitem{L}
P. C. Leopardi, 
Twin bent functions and Clifford algebras, 
to appear in Springer Proceedings in Mathematics and Statistics: Algebraic Design Theory and Hadamard Matrices.

\bibitem{LN}
R, Lidl and H. Niederreiter, 
Introduction to finite fields and their applications,
Cambridge University Press, Cambridge, 1986.

\bibitem{MX}
M. Muzychuk and Q. Xiang, 
Symmetric Bush-type Hadamard matrices of order $4m^4$ exist for all odd $m$,
{\it Proc. Amer. Math. Soc.} 134 (2006), no. 8, 2197--2204. 

\bibitem{W}
W. D. Wallis, On a problem of K.\ A.\ Bush concerning Hadamard matrices, 
{\it Bull. Austr.\ Math.\ Soc.},
6 (1972), 321--326.

\end{thebibliography}
\end{document}